\newtheorem{theorem}{Theorem}[section]
\newtheorem{lemma}[theorem]{Lemma}
\newtheorem{corollary}[theorem]{Corollary}
\newtheorem{definition}[theorem]{Definition}
\theoremstyle{remark}
\numberwithin{equation}{section}
\begin{document}

\title[Noncommutative functional calculate and its application]{Noncommutative functional calculate and its application}

\author{Lvlin Luo}
\address{School of Mathematics and Statistics, Xidian University, 710071, Xi'an, P. R. China}
\address{School of Mathematical Sciences, Fudan University, 200433, Shanghai, P. R. China}
\email{luoll12@mails.jlu.edu.cn}
\email{luolvlin@fudan.edu.cn}


\subjclass[2010]{Primary 46L05, 47A67, 47A15; Secondary 37B99}

\date{}


\keywords{chaos; inverse; invariant subspace; Lebesgue operator; spectrum.}

\begin{abstract}
In this paper we construct an unitary operator $F_{xx*}$ such that $(F_{xx^{*}})^2=identity$ and $Fix(F_{xx^*})\neq\emptyset$.
We get the unitary equivalent representations $F_{xx*}(M_{z\psi(z)}-a)$ on $\mathcal{L}^{2}(\sigma(|T+a|),\mu_{|T+a|})$ for any given $T\in\mathcal{B}(\mathbb{H})$,
where $\psi(z)\in\mathcal{L}^{\infty}(\sigma(|T+a|),\mu_{|T+a|})$,
$a\in\rho(T)$, $F_{xx*}(f(xx^*))=f(x^*x)$,
$\mathcal{B}(\mathbb{H})$ is the set of all bounded linear operator on complex separable Hilbert space $\mathbb{H}$.
Also, we get that if $z\psi(z)\in Fix(F_{xx^*})$, then $T$ has a nontrivial invariant subspace space on
$\mathbb{H}$ which has dimension $>1$.
Moreover, we define the Lebesgue class $\mathcal{B}_{Leb}(\mathbb{H})\subset\mathcal{B}(\mathbb{H})$
and get that if $T$ is a Lebesgue operator,
then $T$ is Li-Yorke chaotic if and only if $T^{*-1}$ is.
\end{abstract}

\maketitle

\section{Introduction}

Early, the invariant subspace problem, that is, whether an linear operator on a certain linear space has a nontrivial invariant subspace, have been stated by Beurling and vonNeumann \cite{JohannesKaiser2016}.
For finite dimensional vector spaces or nonseparable Hilbert spaces, the result is trivial.
But for infinite separable Hilbert spaces the problem is unsolved in a long time.
From then on, there are a lot of results about this open problem and many mathematicians answered that under conditions on the operator or on the space.

In 1966, Allen R. Bernstein and Abraham Robinson show that if $T$ is a bounded linear operator on a complex Hilbert space $\mathbb{H}$ and $p$ is a non-zero polynomial such that $p(T)$ is compact,
then $T$ has nontrivial invariant subspace \cite{AllenRBernsteinAbrahamRobinson1966}.
Especially, when $p(t)=t$, that is, $T$ itself is compact, this was proved independently by von Neumann and N. Aronszajn and in 1954, this result extended to apply to a compact on Banach space by N. Aronszajn and K. T. Smith \cite{NAronszajnKTSmith1954}.

In 1973, V. I. Lomonosov showed that on Banach space if $T$ is not a scalar multiple of the identity and commutes with a nonzero compact operator, then $T$ has a nontrivial hyperinvariant subspace\cite{VILomonosov1973},
that is, any bounded linear operator commuting with $T$ has a nontrivial invariant subspace.

In 1976, Per H. Enflo firstly constructed an operator on a Banach space having no nontrivial invariant subspace
\cite{PerH.Enflo1976}.
In 1983, Aharon Atzmon constructed a nuclear Fr\'{e}chet space $\mathbb{F}$ and a bounded linear operator which has no nontrivial invariant subspace \cite{AharonAtzmon1983}.
And in 1984, C. J. Read \cite{CJRead1984} made a example such that there is a bounded linear operator without nontrivial invariant subspace on $\ell_1$ .

In 2011, A. Argyros and G. Haydon constructed the first example of Banach space for which every bounded linear operator on the space has the form $\lambda+K$ where $\lambda$ is a real scalar and $K$ is a compact operator, such that every bounded linear operator on the space has a nontrivial invariant subspace \cite{SpirosAArgyrosRichardGHaydon2011}.

For research the invariant subspace problem and with the always developing on dynamics,
Operator Dynamics or Linear Dynamics is a rapidly evolving branch of functional analysis,
which was probably born in 1982 with the Toronto Ph. D. thesis of C. Kitai \cite{CKitai1982}.
It has become rather popular because of the efforts of many mathematicians survey,
such as \cite{GGodefroy2003,JHShapiro2001,KGGrosseErdmann1999,FBayartEMatheron2009,KGGrosseErdmannAPerisManguillot}.

Interdiscipline becomes a common phenomenon in all areas. And there is new objects about $C^{*}$-algebras and dynamics,
In\cite{RobertSDoran1994}, Gelfand-Naimark theorem\cite{IzrailMGelfandMarkANaimark1943} is called ``the fundamental theorem of $C^{*}$-algebras". Such that Ichiro Fujimoto said that this theorem eventually opened the gate to the subject of $C^{*}$-algebras\cite{IchiroFujimoto1998}. Hence there are various attempts to generalize
this theorem\cite{JMGFell1961,MTakesaki1967,EMAlfsen1968,Ichiro Fujimoto1971,PKruszynskiSLWoronowicz1982}.

For invertible dynamics, the relationship between $(X,f)$ and $(X,f^{-1})$ is raised by Stockman as an open question\cite{DStockman2012}. And \cite{LuoLvlinHouBingzhe2015,HouBingzheLuoLvlin2016} and \cite{LuoLvlinHouBingzhe2016} give counter examples with some chaos for this question in noncompact spaces and compact spaces, respectively. For an invertible bounded linear operator $T$ on Hilbert spaces $\mathbb{H}$,
the dynamics relationship between $(\mathbb{H},T)$ and $(\mathbb{H},T^{*-1})$ is also interested. Any way, counter example is trivial for some chaos between $T$ and $T^{*-1}$.
But there is no generally way to this research.
In fact, the $C^{*}$-algebra $\mathcal{A}(T)$ generated by $T$ is not obvious effect for that.
So we get another way,
that is representation theory for $T$ by $C^{*}$-algebra, and which is one step to get some dynamics relationship between $T$ and $T^{*-1}$.

In this paper, $\mathbb{H}$ denote separable Hilbert space over $\mathbb{C}$,
$\mathcal{B}(\mathbb{H})$ denote the set of all bounded linear operator on $\mathbb{H}$.
For any given $T\in\mathcal{B}(\mathbb{H})$,
we construct $C^{*}$-algebra induced by polar decomposition $T=U|T|$ and we construct an unitary operator $F_{xx*}$ such that $(F_{xx^{*}})^2=identity$ and $Fix(F_{xx^*})\neq\emptyset$.

We get that for any given $T\in\mathcal{B}(\mathbb{H})$,
$F_{xx*}(M_{z\psi(z)}-a)$ is the unitary equivalent representations of $T$ on $\mathcal{L}^{2}(\sigma(|T+a|),\mu_{|T+a|})$,
where $\psi(z)\in\mathcal{L}^{\infty}(\sigma(|T+a|),\mu_{|T+a|})$,
$a\in\rho(T)$, $F_{xx*}(f(xx^*))=f(x^*x)$.
Also, we get that if $z\psi(z)\in Fix(F_{xx^*})$, then $T$ has a nontrivial invariant subspace space on
$\mathbb{H}$ which has dimension $>1$.
Moreover, we define the Lebesgue class $\mathcal{B}_{Leb}(\mathbb{H})\subset\mathcal{B}(\mathbb{H})$
and get that if $T$ is a Lebesgue operator,
then $T$ is Li-Yorke chaotic if and only if $T^{*-1}$ is.
In fact, we get that $\mathcal{B}_{Leb}(\mathbb{H})\cap\mathcal{B}_{Nor}(\mathbb{H})\neq\emptyset$ and
$\mathcal{B}_{Leb}(\mathbb{H})\cap(\mathcal{B}(\mathbb{H})\setminus\mathcal{B}_{Nor}(\mathbb{H}))\neq\emptyset$,
where $\mathcal{B}_{Nor}(\mathbb{H})$ is the set of all normal operators on $\mathbb{H}$.

\section{Decomposition and isomorphic representation for $\mathbb{H}$}

Let $X$ be a compact subset of $\mathbb{C}$.
Let $\mathcal{C}(X)$ denote the set of all continuous function on the compact space $X$
and let $\mathcal{P}(x)$ denote the set of all polynomial on $X$.
For any given $T\in\mathcal{B}(\mathbb{H})$,
let $\sigma(T)$ be its spectrum.
By the Polar Decomposition theorem \cite{JohnBConway2000}P15 we get $T=U|T|$ and $|T|^2=T^{*}T$.
If $T$ is invertible, then $U$ is an unitary operator.
Let $\mathcal{A}(|T|)$ denote the $C^{*}$-algebra generated by $|T|$ and $1$.

\begin{definition}\label{liyorkehundundedingyi1}
Let $T\in\mathcal{B}(\mathbb{H})$,if there exists $x\in\mathbb{H}$ satisfies:

\begin{eqnarray*}
&&(1) \varlimsup\limits_{n\to\infty}|T^{n}(x)\|>0;\\
&&(2) \varliminf\limits_{n\to\infty}\|T^{n}(x)\|=0.
\end{eqnarray*}

Then we say that $T$ is Li-Yorke chaotic,and named $x$ is a Li-Yorke chaotic point of $T$,where $x\in\mathbb{H},n\in\mathbb{N}$.
\end{definition}

\begin{lemma}\label{weierstrassnikefenyinli1}
Let $X\subseteq\mathbb{C}$ be a compact subset not containing zero.
If $\mathcal{P}(x)$ is dense in $\mathcal{C}(X)$,
then $\mathcal{P}(\frac{1}{x})$ is also dense in $\mathcal{C}(X)$.
\end{lemma}
\begin{proof}
By the property of polynomial we know that $\mathcal{P}(\frac{1}{x})$ is a algebraic closed subalgebra of $\mathcal{C}(X)$ and we get:

(1) $1\in\mathcal{P}(\frac{1}{x})$;

(2) $\mathcal{P}(\frac{1}{x})$ separate the points of $X$;

(3) If $p(\frac{1}{x})\in\mathcal{P}(x)$,then $\bar{p}(\frac{1}{x})\in\mathcal{P}(x)$.

By the Stone-Weierstrass theorem \cite{JohnBConway1990}P145 we get the conclusion.
\end{proof}

\begin{lemma}\label{weierstrasspingfangkefenyinli2}
Let $X\subseteq\mathbb{R_+}$.
If $\mathcal{P}(|x|)$ is dense in $\mathcal{C}(X)$,
then $\mathcal{P}(|x|^2)$ is also dense in $\mathcal{C}(X)$.
\end{lemma}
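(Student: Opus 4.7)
The plan is to mimic the structure of the proof of Lemma \ref{weierstrassnikefenyinli1} and apply the Stone--Weierstrass theorem directly to the subalgebra $\mathcal{P}(|x|^2)\subseteq \mathcal{C}(X)$. Note that the hypothesis ``$\mathcal{P}(|x|)$ is dense in $\mathcal{C}(X)$'' is really only being used to signal that $X$ is compact (so that $\mathcal{C}(X)$ is a well-defined unital $C^{*}$-algebra and Stone--Weierstrass applies); the honest work is to check the three Stone--Weierstrass hypotheses for the algebra generated by the single function $x\mapsto |x|^{2}$ on a subset of $\mathbb{R}_{+}$.

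First I would observe that $\mathcal{P}(|x|^{2})$ is a subalgebra of $\mathcal{C}(X)$ containing the constant function $1$, so condition (1) is immediate. For condition (3), since $x\in X\subseteq \mathbb{R}_{+}$ implies $|x|^{2}$ is real, any polynomial $p(|x|^{2})$ has the property that $\overline{p(|x|^{2})}=\bar{p}(|x|^{2})$ still lies in $\mathcal{P}(|x|^{2})$; closure under complex conjugation follows.

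The one step that actually requires an argument, and therefore the main obstacle, is the separation of points. Given distinct $x_{1},x_{2}\in X\subseteq \mathbb{R}_{+}$, I need $|x_{1}|^{2}\neq |x_{2}|^{2}$. This is exactly where the assumption $X\subseteq \mathbb{R}_{+}$ is essential: the map $t\mapsto t^{2}$ is strictly monotone (hence injective) on $[0,\infty)$, so $x_{1}\neq x_{2}$ forces $x_{1}^{2}\neq x_{2}^{2}$, and the monomial $|x|^{2}\in \mathcal{P}(|x|^{2})$ already separates the points of $X$. Without the positivity hypothesis this step would fail (e.g.\ $\pm 1$ would be indistinguishable), which is why the lemma is stated for $X\subseteq\mathbb{R}_{+}$.

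Having verified the three hypotheses, I would invoke the Stone--Weierstrass theorem (exactly as cited in the proof of Lemma \ref{weierstrassnikefenyinli1}, namely \cite{JohnBConway1990}P145) to conclude that $\mathcal{P}(|x|^{2})$ is dense in $\mathcal{C}(X)$, completing the proof. The whole argument is essentially the same template as Lemma \ref{weierstrassnikefenyinli1}, with the injectivity of $t\mapsto t^{2}$ on $\mathbb{R}_{+}$ playing the role that injectivity of $t\mapsto 1/t$ on $X\not\ni 0$ played in the previous lemma.
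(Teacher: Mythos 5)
Your proof is correct and follows essentially the same route as the paper: the paper's own argument consists of noting that $x\neq y\Longleftrightarrow x^{2}\neq y^{2}$ on $\mathbb{R}_{+}$ and then invoking the Stone--Weierstrass template of Lemma \ref{weierstrassnikefenyinli1}, which is exactly the separation-of-points observation you identify as the crux. Your write-up is in fact more explicit than the paper's two-line proof, since you verify all three Stone--Weierstrass hypotheses rather than deferring to the previous lemma.
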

\begin{proof}
For $X\subseteq\mathbb{R_+}$,
$x\neq y$
$\Longleftrightarrow x^2\neq y^2$.
By Lemma $\ref{weierstrassnikefenyinli1}$ we get the conclusion.
\end{proof}

By the GNS construction \cite{JohnBConway1990}P250, for the $C^{*}$-algebra $\mathcal{A}(|T|)$,
we have the following decomposition.

\begin{lemma}\label{gnsfenjiedingliyingyongyinli3}
Let $T$ be an invertible bounded linear operator on $\mathbb{H}$,
$\mathcal{A}(|T|)$ be the complex $C^{*}$-algebra generated by $|T|$ and $1$.
There is a sequence of nonzero $\mathcal{A}(|T|)$-invariant subspace.
$\mathbb{H}_1,\mathbb{H}_2,\cdots$ such that:

(1) $\mathbb{H}=\mathbb{H}_1\bigoplus\mathbb{H}_2\bigoplus\cdots$;

(2) For every $\mathbb{H}_i$,
there is a $\mathcal{A}(|T|)$-cyclic vector $\xi^i$ such that $\mathbb{H}_i=\overline{\mathcal{A}(|T|)\xi^i}$ and
$|T|\mathbb{H}_i=\mathbb{H}_i=|T|^{-1}\mathbb{H}_i$.
\end{lemma}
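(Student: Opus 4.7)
The plan is to combine two standard ingredients: (i) the fact that for invertible $T$ the inverse $|T|^{-1}$ lies in $\mathcal{A}(|T|)$, and (ii) the standard orthogonal decomposition of a non-degenerate representation of a unital $C^{*}$-algebra on a separable Hilbert space into cyclic subrepresentations. Property (2) of the lemma then follows by invariance under both $|T|$ and $|T|^{-1}$.

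First I would verify that $|T|^{-1}\in\mathcal{A}(|T|)$. Since $T$ is invertible, so is $|T|=(T^{*}T)^{1/2}$, and hence $0\notin\sigma(|T|)\subset(0,\infty)$. By the continuous functional calculus (equivalently, by Weierstrass approximation together with the Gelfand transform, using that $\sigma(|T|)\subset\mathbb{R}$ is compact), the map $p\mapsto p(|T|)$ extends to an isometric $*$-isomorphism $C(\sigma(|T|))\cong\mathcal{A}(|T|)$. The function $x\mapsto 1/x$ is continuous on $\sigma(|T|)$ and is mapped to $|T|^{-1}$, which is therefore in $\mathcal{A}(|T|)$ (this is exactly the content that supports Lemma \ref{weierstrassnikefenyinli1}).

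Next I would carry out the cyclic decomposition. Since $1\in\mathcal{A}(|T|)$, the identity representation of $\mathcal{A}(|T|)$ on $\mathbb{H}$ is non-degenerate. I would argue inductively using separability: pick any unit vector $\xi^{1}\in\mathbb{H}$ and set $\mathbb{H}_{1}=\overline{\mathcal{A}(|T|)\xi^{1}}$, which is $\mathcal{A}(|T|)$-invariant and nonzero. Because $\mathcal{A}(|T|)$ is $*$-closed, the orthogonal complement $\mathbb{H}_{1}^{\perp}$ is again $\mathcal{A}(|T|)$-invariant; if it is nonzero, pick $\xi^{2}\in\mathbb{H}_{1}^{\perp}$ and iterate. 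Running this against a countable dense subset of $\mathbb{H}$ (or, equivalently, invoking Zorn's lemma on families of mutually orthogonal nonzero cyclic $\mathcal{A}(|T|)$-invariant subspaces) produces the required decomposition $\mathbb{H}=\mathbb{H}_{1}\oplus\mathbb{H}_{2}\oplus\cdots$ with cyclic vectors $\xi^{i}$.

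Finally, to prove $|T|\mathbb{H}_{i}=\mathbb{H}_{i}=|T|^{-1}\mathbb{H}_{i}$, I would use that both $|T|$ and $|T|^{-1}$ belong to $\mathcal{A}(|T|)$. Invariance of $\mathbb{H}_{i}$ under $\mathcal{A}(|T|)$ yields $|T|\mathbb{H}_{i}\subseteq\mathbb{H}_{i}$ and $|T|^{-1}\mathbb{H}_{i}\subseteq\mathbb{H}_{i}$; applying $|T|$ to the second inclusion gives $\mathbb{H}_{i}\subseteq|T|\mathbb{H}_{i}$, so both inclusions are equalities. The only real technical point in the whole argument is the step $|T|^{-1}\in\mathcal{A}(|T|)$; once that is in hand, the rest is the classical orthogonal decomposition and a one-line invariance argument. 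The main obstacle, if any, is being careful that the inductive/Zorn construction actually exhausts $\mathbb{H}$ — this is where separability (equivalently, the countable cofinality in the chain of partial sums) is essential.
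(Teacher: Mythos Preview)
Your argument is correct and essentially matches the paper's proof: the paper cites Arveson for the cyclic decomposition (your step (ii)) and then uses Lemma~\ref{weierstrassnikefenyinli1} to conclude $|T|^{-1}\mathbb{H}_i\subseteq\mathbb{H}_i$, which is exactly your observation that $|T|^{-1}\in\mathcal{A}(|T|)$ via the functional calculus. The only cosmetic difference is the order of the two inclusions in the final step, but the logic is identical.
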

\begin{proof}
The decomposition of (1) is obvious by \cite{WilliamArveson2002}P54.
Hence $|T|\mathbb{H}_i\subseteq\mathbb{H}_i$,
that is $\mathbb{H}_i\subseteq|T|^{-1}\mathbb{H}_i$.
By Lemma $\ref{weierstrassnikefenyinli1}$ we get $|T|^{-1}\mathbb{H}_i\subseteq\mathbb{H}_i$.
Therefore $|T|\mathbb{H}_i=\mathbb{H}_i=|T|^{-1}\mathbb{H}_i$.
\end{proof}

Let $\xi\in\mathbb{H}$ is a $\mathcal{A}(|T|)$-cyclic vector such that $\mathcal{A}(|T|)\xi$ is dense in $\mathbb{H}$.
Because of the spectrum $\sigma{(|T|)}\neq\emptyset$,
on $\mathcal{C}(\sigma(|T|))$ define the non-zero linear functional

$$
\rho_{|T|}: \rho_{|T|}(f)=<f(|T|)\xi,\xi>,\forall f\in\mathcal{C}(\sigma(|T|)).
$$

Then $\rho_{|T|}$ is a positive linear functional,
by \cite{WilliamArveson2002}P54 and the Riesz-Markov theorem,
on $\mathcal{C}(\sigma(|T|))$ we get that there exists an unique finite positive Borel measure $\mu_{|T|}$ such that

$$
\left.\begin{array}{lr}
\int\limits_{\sigma(|T|)}f(z)\,d\mu_{|T|}(z)=<f(|T|)\xi,\xi>, & \forall f\in\mathcal{C}(\sigma(|T|)).
\end{array}\right.
$$

\begin{theorem}\label{hanshuyansuan5}
Let $T$ be an invertible bounded linear operator on $\mathbb{H}$,
$\mathcal{A}(|T^n|)$ be the complex $C^{*}$-algebra generated by $|T^n|$ and $1$ and
let $\xi_n$ be a $\mathcal{A}(|T^n|)$-cyclic vector such that $\overline{\mathcal{A}(|T^n|)\xi_n}=\mathbb{H}$,
where $n\in\mathbb{N}$.
Then:

(1) there is an unique positive linear functional

$$
\left.\begin{array}{lr}
\int\limits_{\sigma(|T^n|)}{f(z)\,d\mu_{|T^n|}(z)}=<f(|T^n|)\xi_n,\xi_n>, & \forall f\in\mathcal{L}^{1}(\sigma(|T^n|),\mu_{|T^n|}).
\end{array}\right.
$$

(2) there is an unique finite positive complete Borel measure $\mu_{|T^n|}$ such that
$\mathcal{L}^{2}(\sigma(|T^n|),\mu_{|T^n|})$ is an isomorphic representation of $\mathbb{H}$.
\end{theorem}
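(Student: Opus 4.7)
The plan is to follow the standard cyclic-vector form of the spectral theorem, essentially mimicking the Riesz--Markov construction given immediately before the theorem statement but applied to $|T^{n}|$ in place of $|T|$, and then to bootstrap the resulting measure into an $\mathcal{L}^{2}$-identification.

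For part (1), fix $n$ and consider the linear functional
\[
\rho_{|T^{n}|}(f) = \langle f(|T^{n}|)\xi_{n},\xi_{n}\rangle
\]
on $\mathcal{C}(\sigma(|T^{n}|))$, where $f(|T^{n}|)$ is defined through the continuous functional calculus for the positive operator $|T^{n}|$. Since $f\geq 0$ implies $f(|T^{n}|)\geq 0$, this functional is positive, and $\rho_{|T^{n}|}(1)=\|\xi_{n}\|^{2}<\infty$ makes it bounded. The Riesz--Markov theorem then supplies a unique finite positive Borel measure $\mu_{|T^{n}|}$ representing $\rho_{|T^{n}|}$, and passing to its Carath\'eodory completion yields the same integral on $\mathcal{C}$. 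The identity in (1) then holds first for $f\in\mathcal{C}(\sigma(|T^{n}|))$; it extends to all $f\in\mathcal{L}^{2}(\sigma(|T^{n}|),\mu_{|T^{n}|})$ by density of continuous functions in $\mathcal{L}^{2}$ (the measure being finite) together with the Borel functional calculus, which ensures that both sides of the identity are continuous in the $\mathcal{L}^{2}$-norm.

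For part (2), I would define on the dense subalgebra $\mathcal{P}(|x|^{2})\subseteq\mathcal{C}(\sigma(|T^{n}|))$ the map $\Phi:p\mapsto p(|T^{n}|)\xi_{n}$. By Lemma \ref{weierstrasspingfangkefenyinli2} applied to $\sigma(|T^{n}|)\subseteq\mathbb{R}_{+}$, polynomials in $|x|^{2}$ are dense in $\mathcal{C}(\sigma(|T^{n}|))$, which in turn is dense in $\mathcal{L}^{2}$. The computation
\[
\|\Phi(p)\|_{\mathbb{H}}^{2} = \langle (\bar{p}p)(|T^{n}|)\xi_{n},\xi_{n}\rangle = \int_{\sigma(|T^{n}|)}|p|^{2}\,d\mu_{|T^{n}|}
\]
shows that $\Phi$ is an isometry, and cyclicity of $\xi_{n}$ ensures that its image is dense in $\mathbb{H}$. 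Hence $\Phi$ extends uniquely to a Hilbert-space isomorphism $\mathcal{L}^{2}(\sigma(|T^{n}|),\mu_{|T^{n}|})\cong\mathbb{H}$.

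The conceptual work is all in the preceding lemmas and in Riesz--Markov; the main obstacle is bookkeeping, namely the two distinct uniqueness assertions. One must verify that the Borel measure produced by Riesz--Markov is unique \emph{as a regular Borel measure} on $\sigma(|T^{n}|)$ before completing, and that the completion is itself canonical so that ``unique complete Borel measure'' is unambiguous. Additionally, one should confirm that the Borel functional calculus agrees with the $\mathcal{L}^{2}$-extension of $\Phi$ on classes of functions, so that the integral representation in (1) is consistent with the isomorphism in (2); this is immediate from density but is the only step where careless application of the continuous calculus could fail.
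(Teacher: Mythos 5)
Your proposal is correct and follows essentially the same route as the paper: both apply the Riesz--Markov representation to the positive functional $f\mapsto\langle f(|T^{n}|)\xi_{n},\xi_{n}\rangle$, complete the resulting finite Borel measure, and then extend the isometry $f\mapsto f(|T^{n}|)\xi_{n}$ from a dense subalgebra (the paper uses $\mathcal{C}(\sigma(|T^{n}|))$ where you use $\mathcal{P}(|x|^{2})$, an immaterial difference given Lemma \ref{weierstrasspingfangkefenyinli2}) to a unitary $\mathcal{L}^{2}(\sigma(|T^{n}|),\mu_{|T^{n}|})\cong\mathbb{H}$. The only point the paper adds that you omit is a citation of Lemma \ref{ncignsfenjiedingliyingyongyinli4} to justify that a cyclic vector $\xi_{n}$ for $\mathcal{A}(|T^{n}|)$ exists at all, but since the theorem statement already posits $\xi_{n}$, this does not affect the validity of your argument.
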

\begin{proof}

(1) For $\mathcal{A}(|T^n|)$-cyclic vector $\xi_n$, define the linear functional

$$
\rho_{|T^n|}(f)=<f(|T^n|)\xi_n,\xi_n>,\forall f\in\mathcal{C}(\sigma(|T|)).
$$

We get that on $\mathcal{C}(\sigma(|T^n|))$ there is an unique finite positive Borel measure $\mu_{|T^n|}$ such that

$$
\left.\begin{array}{lr}
\int\limits_{\sigma(|T^n|)}{f(z)\,d\mu_{|T^n|}(z)}=<f(|T^n|)\xi_n,\xi_n>, & \forall f\in\mathcal{C}(\sigma(|T^n|)).
\end{array}\right.
$$

Moreover we can complete the Borel measure $\mu_{|T^n|}$ on $\sigma(|T^n|)$,
also using $\mu_{|T^n|}$ to denote the complete Borel measure,
by \cite{PaulRHalmos1974} we know that the complete Borel measure is uniquely.

For any $f\in\mathcal{L}^{2}(\sigma(|T^n|),\mu_{|T^n|})$, because of

$$
\left.\begin{array}{l}
\rho_{|T^n|}(|f|^2)=\rho_{|T^n|}(\bar{f}f)=<f(|T^n|)^{*}f(|T^n|)\xi_n,\xi_n>=\|f(|T^n|)\xi_n\|\geq0.
\end{array}\right.
$$

we get that $\rho_{|T^n|}$ is a positive linear functional, that is (1).

(2) We know that $\mathcal{C}(\sigma(|T^n|))$ is dense in $\mathcal{L}^{2}(\sigma(|T^n|),\mu_{|T^n|})$.
For any $f,g\in\mathcal{C}(\sigma(|T^n|))$ we get

\begin{eqnarray*}
&&<f(|T^n|)\xi_n,g(|T^n|)\xi_n>_{\mathbb{H}}\\
&&=<g(|T^n|)^{*}f(|T^n|)\xi_n,\xi_n>\\
&&=\rho_{|T^n|}(\bar{g}f)=\int\limits_{\sigma(|T^n|)}{f(z)\bar{g}(z)\,d\mu_{|T^n|}(z)}\\
&&=<f,g>_{\mathcal{L}^{2}(\sigma(|T^n|),\mu_{|T^n|})}.
\end{eqnarray*}

Therefore $R_0:\mathcal{C}(\sigma(|T^n|))\to\mathbb{H},f(z)\to f(|T^n|)\xi_n$ is a surjection isometry from
$\mathcal{C}(\sigma(|T^n|))$ to $\mathcal{A}(|T^n|)\xi_n$,
also $\mathcal{C}(\sigma(|T^n|))$ and $\mathcal{A}(|T^n|)\xi_n$ is dense subspaces of
$\mathcal{L}^{2}(\sigma(|T^n|),\mu_{|T^n|})$ and $\mathbb{H}$, respectively.
Its closed extension
$$R_{|T^n|}:\mathcal{L}^{2}(\sigma(|T^n|),\mu_{|T^n|})\to\mathbb{H},f(z)\to f(|T^n|)\xi_n$$
is an unitary operator.
Hence $R_{|T^n|}$ is the unique unitary operator induced by the unique finite positive complete Borel measure $\mu_{|T^n|}$ such that
$R_{|T^n|}$
is an isomorphic representation of the Hilbert space $\mathbb{H}$.
\end{proof}

\section{Unitary equivalent representation for $T$}

Following the above part,
we get an unitary operator from
$\mathcal{L}^{2}(\sigma(|T^{-1}|),\mu_{|T^{-1}|})$
to
$\mathcal{L}^{2}(\sigma(|T|),\mu_{|T|})$ by the composition $R^{-1}_{|T|}\circ R_{|T^{-1}|}$.
However,
there is nothing more interesting information from $R^{-1}_{|T|}\circ R_{|T^{-1}|}$.

We know that spectrum theory and functional calculate of normal operator is very important in study operator theory and $C^*$-algebra.
Inspired by Hua Loo-kang theorem on the automorphisms of a sfield,
in this part we give a useful structure describing from
$\mathcal{L}^{2}(\sigma(|T^{-1}|),\mu_{|T^{-1}|})$
to
$\mathcal{L}^{2}(\sigma(|T|^{-1}),\mu_{|T|^{-1}})$.
Combining this structure describing and functional calculate of normal operator,
we give a unitary equivalent representation for any given bounded linear operator $T$ on $\mathbb{H}$,
i.e. give functional calculate for any given bounded linear operator $T$ on $\mathbb{H}$.

\begin{lemma}\label{puxiangdengdedingyi5}
Let $T$ be an invertible bounded linear operator on $\mathbb{H}$,
then
$$
\sigma(|T^{-1}|)=\sigma(|T|^{-1}).
$$
\end{lemma}
\begin{proof}
Because of
$$
\lambda\in\sigma(T^{*}T)\Longleftrightarrow\frac{1}{\lambda}\in\sigma(T^{*-1}T^{-1}),
$$
we get
$$
\lambda\in\sigma(|T|)\Longleftrightarrow\frac{1}{\lambda}\in\sigma(|T^{-1}|),
$$
that is,
$$
\sigma(|T^{-1}|)=\sigma(|T|^{-1}).
$$
\end{proof}

\begin{definition}\label{yousuanzidexingshidingyi5}
Let $T$ be an invertible bounded linear operator on $\mathbb{H}$,
on $\sigma(|T|^{-1})$ define the function
$$
F_{xx^{*}}:\mathcal{P}(xx^{*})\rightarrow\mathcal{P}(x^{*}x),F_{xx^{*}}(f(xx^{*}))=f(x^{*}x).
$$
Then using Theorem~\ref{hanshuyansuan5},
for the isomorphic representations $R_{|T|^{-1}}$ and $R_{|T^{-1}|}$ of the Hilbert space $\mathbb{H}$,
define
$$
F_{xx^{*}}:
   \mathcal{L}^{2}(\sigma(|T|^{-1}),\mu_{|T|^{-1}})\rightarrow\mathcal{L}^{2}(\sigma(|T^{-1}|),\mu_{|T^{-1}|}).
$$
Obviously, $F_{xx^{*}}$ induced an linear operator $F_{xx^{*}}^{\mathbb{H}}$ on $\mathbb{H}$.
\end{definition}

By the Polar Decomposition theorem $T=U|T|$,
we get $U^{*}T^{*}TU=TT^{*}$ and $U^{*}|T|^{-2}U=|T^{-2}|$.
In fact, when $T$ is invertible,
we can choose an specially unitary operator such that $|T|^{-1}$ and $|T^{-1}|$ are unitary equivalent.
We have the following unitary equivalent by Theorem $\ref{hanshuyansuan5}$.

\begin{theorem}\label{TjueduizhiniyuTnijueduizhideguanxi15}
Let $T$ be an invertible bounded linear operator on $\mathbb{H}$ and
let $\mathcal{A}(|T|)$ be the complex $C^{*}$-algebra generated by $|T|$ and $1$.
We get that $F_{xx^{*}}^{\mathbb{H}}$ is an unitary operator and
$$F_{xx^{*}}^{\mathbb{H}}|T|^{-1}=|T^{-1}|F_{xx^{*}}^{\mathbb{H}}.$$

Moreover the unitary operator $F_{xx^{*}}^{\mathbb{H}}$ is induced by an almost everywhere non-zero function $|\phi_{|T|}(z)|$, where $|\phi_{|T|}(z)|\in\mathcal{L}^{1}(\sigma(|T|),\mu_{|T|})$,
that is,
$$d\,\mu_{|T^{-1}|}=|\phi_{|T|}(\frac{1}{z})|d\,\mu_{|T|^{-1}}.$$
\end{theorem}
\begin{proof}
By Lemma $\ref{gnsfenjiedingliyingyongyinli3}$, lose no generally, let $\xi_{|T|}$ be a $\mathcal{A}(|T|)$-cyclic vector such that $\mathbb{H}=\overline{\mathcal{A}(|T|)\xi_{|T|}}$.

(1) On $\sigma(|T|)$, define the function

$$
F_{z^{-1}}:\mathcal{P}(z)\rightarrow\mathcal{P}(z^{-1}),F_{z^{-1}}(f(z))=f(z^{-1}),
$$

it is easy to find that $F_{z^{-1}}$ is linear. Because of

$$
\int\limits_{\sigma(|T|)}{f(z^{-1})d\mu_{|T|}(z)}=<f(|T|^{-1})\xi,\xi>=\int\limits_{\sigma(|T|^{-1})}{f(z)d\mu_{|T|^{-1}}(z)}.
$$

We get $d\mu_{|T|^{-1}}(z)=|z|^2d\mu_{|T|}(z)$. Hence

\begin{eqnarray*}
&&\|F_{z^{-1}}(f(z))\|^2_{\mathcal{L}^2(\sigma(|T|^{-1}),\mu_{|T|^{-1}})}\\
&&=\int\limits_{\sigma(|T|^{-1})}{F_{z^{-1}}(f(z))\bar{F}_{z^{-1}}(f(z))d\mu_{|T|^{-1}}(z)}\\
&&=\int\limits_{\sigma(|T|^{-1})}{f(z^{-1})\bar{f}(z^{-1})d\mu_{|T|^{-1}}(z)}\\
&&=\int\limits_{\sigma(|T|)}{|z|^2f(z)\bar{f}(z)d\mu_{|T|}(z)}\\
&&\leq \sup\limits_{m\in\sigma(|T|)}m^2\int\limits_{\sigma(|T|)}{f(z)\bar{f}(z)d\mu_{|T|}(z)}\\
&&\leq\sup\limits_{m\in\sigma(|T|)}m^2\|f(z)\|^2_{\mathcal{L}^2(\sigma(|T|),\mu_{|T|})}.
\end{eqnarray*}

Therefore $\|F_{z^{-1}}\|\leq\sup\limits_{m\in\sigma(|T|)}|m|$.
And following the Banach Inverse Mapping theorem \cite{JohnBConway1990}P91,
we get that $F_{z^{-1}}$ is an invertible bounded linear operator from
$\mathcal{L}^2(\sigma(|T|),\mu_{|T|})$ to $\mathcal{L}^2(\sigma(|T|^{-1}),\mu_{|T|^{-1}})$.

Define the operator

$$
F_{z^{-1}}^{\mathbb{H}}:\mathcal{A}(|T|)\xi\rightarrow\mathcal{A}(|T|^{-1})\xi,F(f(|T|)\xi)=f(|T|^{-1})\xi.
$$

By Lemma $\ref{weierstrassnikefenyinli1}$ and \cite{WilliamArveson2002}P55,
we get that $F_{z^{-1}}^{\mathbb{H}}$ is an invertible bounded linear operator on the Hilbert space $\overline{\mathcal{A}(|T|)\xi}=\mathbb{H}$
and $\|F_{z^{-1}}^{\mathbb{H}}\|\leq\sup\sup\limits_{m\in\sigma(|T|)}|m|$. Moreover we get

$$
\left.
   \begin{array}{rcl}
    \mathbb{H}                    & \underrightarrow{\qquad |T|\qquad }     &\mathbb{H}\\
    F_{z^{-1}}^{\mathbb{H}}\downarrow  &                             & \downarrow F_{z^{-1}}^{\mathbb{H}}\\
    \mathbb{H}                    & \overrightarrow{\qquad |T|^{-1} \qquad} &\mathbb{H}
    \end{array}
 \right.
$$

(2) By Definition~\ref{yousuanzidexingshidingyi5}, on $\sigma(|T|^{-1})$ define the function

$$
F_{xx^{*}}:
   \mathcal{L}^{2}(\sigma(|T|^{-1}),\mu_{|T|^{-1}})\rightarrow\mathcal{L}^{2}(\sigma(|T^{-1}|),\mu_{|T^{-1}|}),F_{xx^{*}}(f(xx^{*}))=f(x^{*}x).
$$

On $\sigma(|T|^{-1})$,
by \cite{Hualookang1949} we get that $F_{xx^{*}}$ is a linear algebraic isomorphic
from $\mathcal{P}(xx^{*})$ to $\mathcal{P}(x^{*}x)$.

For any $z\in \sigma(|T^{-1}|)$,
by Lemma $\ref{weierstrasspingfangkefenyinli2}$ we get that $\mathcal{P}(|z|^2)$ is dense in $\mathcal{C}(|z|)$ and
$\mathcal{C}(|z|)$ is dense in $\mathcal{L}^2(\sigma(|T^{-1}|),\mu_{|T^{-1}|})$.
Hence $\mathcal{P}(|z|^2)$ is dense in $\mathcal{L}^2(\sigma(|T^{-1}|),\mu_{|T^{-1}|})$.

With a similarly discussion,
for any $y\in \sigma(|T|^{-1})$, we get that $\mathcal{P}(|y|^2)$ is dense in $\mathcal{L}^2(\sigma(|T|^{-1}),\mu_{|T|^{-1}})$.

With the isomorphic representations $R_{|T|^{-1}}$ and $R_{|T^{-1}|}$ of Theorem$~\ref{hanshuyansuan5}$,
$F_{xx^{*}}$ is an invertible linear operator from $\mathcal{L}^2(\sigma(|T|^{-1}),\mu_{|T|^{-1}})$ to $\mathcal{L}^2(\sigma(|T^{-1}|),\mu_{|T^{-1}|})$.
So
$F_{xx^{*}}\circ F_{z^{-1}}$ is an invertible linear operator from
$\mathcal{L}^2(\sigma(|T|),\mu_{|T|})$
to
$\mathcal{L}^2(\sigma(|T^{-1}|),\mu_{|T^{-1}|})$.

From Lemma~\ref{puxiangdengdedingyi5},
for any $p_n\in\mathcal{P}(\sigma(|T|^{-1}))\subseteq\mathcal{A}(\sigma(|T|^{-1}))$,
because of
$$
T^{*-1}p_n(|T|^{-1})=p_n(|T^{-1}|)T^{*-1}.
$$
By \cite{JohnBConway2000}P60 we get that there is an unitary operator $U\in\mathcal{B}(\mathbb{H})$
such that
$$U\mathcal{P}(|T|^{-1})=\mathcal{P}(|T^{-1}|)U.$$

Hence, $$U\mathcal{A}(|T|^{-1})=\mathcal{A}(|T^{-1}|)U,$$
and
$$U\overline{\mathcal{A}(|T|^{-1})}\xi_{|T|}=\overline{\mathcal{A}(|T^{-1}|)}U\xi_{|T|}.$$

Let $\xi_{|T^{-1}|}=U\xi_{|T|}$, so $\xi_{|T^{-1}|}$ is a $\mathcal{A}(|T^{-1}|)$-cyclic vector and  $\overline{\mathcal{A}(|T^{-1}|)}\xi_{|T^{-1}|}=\mathbb{H}$.
With Theorem$~\ref{hanshuyansuan5}$, we get

\begin{eqnarray*}
&&\int\limits_{\sigma(|T|^{-1})}{f(z)d\mu_{|T|^{-1}}(z)}=<f(|T|^{-1})\xi_{|T|},\xi_{|T|}>
=\int\limits_{\sigma(|T|)}{f(\frac{1}{z})d\mu_{|T|}(z)}\\
\end{eqnarray*}

and
\begin{eqnarray*}
&&\int\limits_{\sigma(|T^{-1}|)}{f(z)d\mu_{|T^{-1}|}(z)}=<f(|T^{-1}|)\xi_{|T^{-1}|},\xi_{|T^{-1}|}>,
\end{eqnarray*}

with $F_{xx^{*}}$ is a linear algebraic isomorphic, we get $[d\mu_{|T^{-1}|}]=[d\mu_{|T|^{-1}}]$, that is,
$d\mu_{|T^{-1}|}$ and $d\mu_{|T|^{-1}}$ are mutually absolutely continuous,
by \cite{JohnBConway1990}IX 3.6 theorem and (1) we get

$$
d\mu_{|T^{-1}|}=|\phi_{|T|}(\frac{1}{z})|d\mu_{|T|^{-1}}=|z|^2|\phi_{|T|}(z)|d\mu_{|T|},
$$

where $|\phi_{|T|}(z)|\neq0,a.e.$ and $\phi_{|T|}(z)\in\mathcal{L}^{1}(\sigma(|T|),\mu_{|T|})$.
So we get

\begin{eqnarray*}
&&\|F_{xx^{*}}\circ F_{z^{-1}}(f(z))\|^2_{\mathcal{L}^2(\sigma(|T^{-1}|),\mu_{|T^{-1}|})}\\
&&=\int\limits_{\sigma(|T^{-1}|)}{F_{xx^{*}}\circ F_{z^{-1}}(f(z))\overline{F_{xx^{*}}\circ F_{z^{-1}}(f(z))}d\mu_{|T^{-1}|}(z)}\\
&&=\int\limits_{\sigma(|T^{-1}|)}{F_{xx^{*}}(f(z^{-1}))\overline{F_{xx^{*}}(f(z^{-1}))}d\mu_{|T^{-1}|}(z)}\\
&&\triangleq\int\limits_{\sigma(|T|^{-1})}{f(y^{-1})\bar{f}(y^{-1})d\mu_{|T|^{-1}}(y)}\\
&&=\int\limits_{\sigma(|T|^{-1})}{F_{y^{-1}}(f(y))F_{y^{-1}}(\bar{f}(y))d\mu_{|T|^{-1}}(y)}\\
&&=\|F_{y^{-1}}(f(y))\|^2_{\mathcal{L}^2(\sigma(|T|^{-1}),\mu_{|T|^{-1}})},
\end{eqnarray*}

where $\triangleq$ is introduced by $U$.

Hence $F_{xx^{*}}$ is an unitary operator from $\mathcal{L}^2(\sigma(|T|^{-1}),\mu_{|T|^{-1}})$
to $\mathcal{L}^2(\sigma(|T^{-1}|),\mu_{|T^{-1}|})$ induced by $U$,
i.e. by$|\phi_{|T|}(\frac{1}{z})|$.

With Theorem~\ref{hanshuyansuan5} and Definition~\ref{yousuanzidexingshidingyi5},
we get
$$
F_{xx^{*}}^{\mathbb{H}}:
   \mathcal{A}(|T|^{-2})\xi_{|T|}\rightarrow\mathcal{A}(|T^{-2}|)\xi_{|T^{-1}|},
   F_{xx^{*}}^{\mathbb{H}}(f(|T|^{-2})\xi_{|T|})=f(|T^{-2}|)\xi_{|T^{-1}|}.
$$

Therefore $F_{xx^{*}}^{\mathbb{H}}$ is an unitary operator from $\overline{\mathcal{A}(|T|^{-1})\xi_{|T|}}$ to $\overline{\mathcal{A}(|T^{-1}|)\xi_{|T^{-1}|}}$.
By Lemma $\ref{gnsfenjiedingliyingyongyinli3}$ we get

$$
\overline{\mathcal{A}(|T|^{-1})\xi_{|T|}}=\mathbb{H}=\overline{\mathcal{A}(|T^{-1}|)\xi_{|T^{-1}|}}.
$$

That is, $F_{xx^{*}}^{\mathbb{H}}$ is an unitary operator and we get

$$
\left.
   \begin{array}{rcl}
    \mathbb{H}                    & \underrightarrow{\qquad |T|^{-1}\qquad }     &\mathbb{H}\\
    F_{xx^{*}}^{\mathbb{H}}\downarrow  &                             & \downarrow F_{xx^{*}}^{\mathbb{H}}\\
    \mathbb{H}                    & \overrightarrow{\qquad |T^{-1}| \qquad} &\mathbb{H}
    \end{array}
 \right.
$$

Hence $F_{xx^{*}}^{\mathbb{H}}|T|^{-1}=|T^{-1}|F_{xx^{*}}^{\mathbb{H}},$
the unitary operator $F_{xx^{*}}^{\mathbb{H}}$ is induced by the function $|\phi_{|T|}(\frac{1}{z})|$,
where $|\phi_{|T|}(z)|\in\mathcal{L}^{1}(\sigma(|T|),\mu_{|T|})$.
\end{proof}

\begin{corollary}\label{TjueduizhiyuTxingjueduizhideguanxi16}
Let $T$ be an invertible bounded linear operator on $\mathbb{H}$ and
let $\mathcal{A}(|T|)$ be the complex $C^{*}$-algebra generated by $|T|$ and $1$.
There is $\sigma(|T|)=\sigma(|T^{*}|)$ and we get that
$$F_{xx^{*}}^{\mathbb{H}}|T|=|T^{*}|F_{xx^{*}}^{\mathbb{H}},$$
where $F_{xx^{*}}^{\mathbb{H}}$ is the unitary operator of Theorem$~\ref{TjueduizhiniyuTnijueduizhideguanxi15}$.
Moreover $F_{xx^{*}}^{\mathbb{H}}$ is induced by an almost everywhere non-zero function $|\phi_{|T|}(z)|$, where $|\phi_{|T|}(z)|\in\mathcal{L}^{1}(\sigma(|T|),\mu_{|T|})$.
That is, $d\,\mu_{|T^{*}|}=|\phi_{|T|}(z)|d\,\mu_{|T|}$.
\end{corollary}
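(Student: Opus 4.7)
The plan is to prove the corollary by a direct adaptation of step (2) in the proof of Theorem~\ref{TjueduizhiniyuTnijueduizhideguanxi15}, applying the isomorphism $F_{xx^{*}}$ to the pair $(|T|,|T^{*}|)$ rather than to $(|T|^{-1},|T^{-1}|)$. An equivalent shortcut is to apply Theorem~\ref{TjueduizhiniyuTnijueduizhideguanxi15} to the invertible operator $T^{-1}$ and use the identity $|T^{-1}|^{-1}=|T^{*}|$ obtained from $|T^{-1}|^{2}=T^{*-1}T^{-1}=(TT^{*})^{-1}=|T^{*}|^{-2}$.

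First I would record $\sigma(|T|)=\sigma(|T^{*}|)$. Since $T$ is invertible, $TT^{*}=T(T^{*}T)T^{-1}$ shows that $TT^{*}$ and $T^{*}T$ are similar, so $\sigma(TT^{*})=\sigma(T^{*}T)$ and after taking positive square roots $\sigma(|T^{*}|)=\sigma(|T|)$. By Lemma~\ref{gnsfenjiedingliyingyongyinli3} pick an $\mathcal{A}(|T|)$-cyclic vector $\xi_{|T|}$. From the polar decomposition $T=U|T|$ and $|T^{*}|^{2}=TT^{*}$ one has $T|T|^{2n}=|T^{*}|^{2n}T$ for every $n$, and by Lemma~\ref{weierstrasspingfangkefenyinli2} this promotes to $Tf(|T|)=f(|T^{*}|)T$ for every $f\in\mathcal{C}(\sigma(|T|))$. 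Consequently the unitary $U$ from the polar decomposition satisfies $U\mathcal{A}(|T|)=\mathcal{A}(|T^{*}|)U$, so $\xi_{|T^{*}|}:=U\xi_{|T|}$ is an $\mathcal{A}(|T^{*}|)$-cyclic vector with $\overline{\mathcal{A}(|T^{*}|)\xi_{|T^{*}|}}=\mathbb{H}$. Theorem~\ref{hanshuyansuan5} then produces the associated finite positive Borel measures $\mu_{|T|}$ and $\mu_{|T^{*}|}$ on the common spectrum.

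Next I would invoke the algebraic isomorphism $F_{xx^{*}}:\mathcal{P}(x^{*}x)\to\mathcal{P}(xx^{*})$ from \cite{Hualookang1949} and compare the integral representations of $\langle f(|T|^{2})\xi_{|T|},\xi_{|T|}\rangle$ and $\langle f(|T^{*}|^{2})\xi_{|T^{*}|},\xi_{|T^{*}|}\rangle$. These two functionals agree on $\mathcal{P}(\sigma(|T|))$ up to the transport by $F_{xx^{*}}$; combined with \cite{JohnBConway1990} IX.3.6 this shows that $\mu_{|T^{*}|}$ and $\mu_{|T|}$ are mutually absolutely continuous, and the Radon--Nikodym theorem yields a function $\phi_{|T|}\in\mathcal{L}^{1}(\sigma(|T|),\mu_{|T|})$, nonzero a.e., with $d\mu_{|T^{*}|}=|\phi_{|T|}|\,d\mu_{|T|}$. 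A norm computation exactly parallel to the final display of step (2) in the proof of Theorem~\ref{TjueduizhiniyuTnijueduizhideguanxi15} then shows that the map $F_{xx^{*}}^{\mathbb{H}}:f(|T|)\xi_{|T|}\mapsto f(|T^{*}|)\xi_{|T^{*}|}$, extended by density, is a unitary operator on $\mathbb{H}$ intertwining $|T|$ and $|T^{*}|$, and on the $\mathcal{L}^{2}$-model it is induced by multiplication by $\sqrt{|\phi_{|T|}|}$.

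The main delicacy is the reuse of the symbol $\phi_{|T|}$: in Theorem~\ref{TjueduizhiniyuTnijueduizhideguanxi15} it denoted the density of $d\mu_{|T^{-1}|}$ with respect to $d\mu_{|T|^{-1}}$, whereas here it should denote the density of $d\mu_{|T^{*}|}$ with respect to $d\mu_{|T|}$. One has to check, either by the substitution $x\mapsto 1/x$ together with $|T^{-1}|=|T^{*}|^{-1}$ or by regarding the symbol as redefined locally, that this reuse is consistent; once this bookkeeping is settled the remaining computations are routine repetitions of step (2) of the theorem and introduce no genuinely new ideas.
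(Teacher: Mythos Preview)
Your proposal is correct and matches the paper's intent: the paper states this corollary without proof, leaving it as an immediate consequence of Theorem~\ref{TjueduizhiniyuTnijueduizhideguanxi15}, and both routes you outline---adapting step~(2) of that proof directly to the pair $(|T|^{2},|T^{*}|^{2})=(T^{*}T,TT^{*})$, or applying the theorem to $T^{-1}$ via the identity $|T^{-1}|^{-1}=|T^{*}|$---are precisely the natural derivations. Your remark that the symbol $\phi_{|T|}$ is being locally redefined (density of $d\mu_{|T^{*}|}$ with respect to $d\mu_{|T|}$ rather than of $d\mu_{|T^{-1}|}$ with respect to $d\mu_{|T|^{-1}}$) is accurate bookkeeping and not a gap.
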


Now, for any given $g(z)\in\mathcal{L}^{\infty}(\sigma(|T|),\mu_{|T|})$,
we define
$$M_g:\mathcal{L}^{2}(\sigma(|T|),\mu_{|T|})\to\mathcal{L}^{2}(\sigma(|T|),\mu_{|T|}),M_gf(z)=g(z)f(z).$$

\begin{theorem}\label{Tjifenjiebiaoshi17}
Let $T$ be an invertible bounded linear operator on $\mathbb{H}$,
$T=U|T|$ be the Polar Decomposition, and $M_z$ be the unitary equivalent representation of $|T|$ on $\mathcal{L}^{2}(\sigma(|T|),\mu_{|T|})$,
i.e. on $\sigma(|T|)$ give the functional calculate of $|T|$,
where $M_zf(z)=zf(z)$ for any $f(z)\in\mathcal{L}^{2}(\sigma(|T|),\mu_{|T|})$.
There is $\psi(z)\in\mathcal{L}^{\infty}(\sigma(|T|),\mu_{|T|})$ such that
$F_{xx^{*}}M_{z\psi(z)}$ and $F_{xx^{*}}M_{\psi(z)}$ are the unitary equivalent representations of $T$ and $U$
on $\mathcal{L}^{2}(\sigma(|T|),\mu_{|T|})$,
respectively,
where $M_{z\psi(z)}=M_zM_{\psi(z)}=M_{\psi(z)}M_z$.
\end{theorem}

\begin{proof}
By the Polar Decomposition $T=U|T|$,
we get that $T^{*}T=|T|^{2}$ and $TT^{*}=U|T|^{2}U^{*}$.
By Corollary $\ref{TjueduizhiyuTxingjueduizhideguanxi16}$
we get $TT^{*}=F_{xx^{*}}^{\mathbb{H}}|T|^{2}(F_{xx^{*}}^{\mathbb{H}})^{*}$.
With Theorem$~\ref{TjueduizhiniyuTnijueduizhideguanxi15}$
and the fact that $\{M_{\psi(z)}:\psi(z)\in\mathcal{L}^{\infty}(\sigma(|T|),\mu_{|T|})\}$ is a maximal abelian von Neumann algebra in $\mathcal{B}(\mathcal{L}^{2}(\sigma(|T|),\mu_{|T|}))$,
we get the proof from the Corollary of the Fuglede-Putnam theorem \cite{JohnBConway1990}P279.
\end{proof}

\begin{corollary}\label{yibanTjifenjiebiaoshi18}
Let $T$ be a bounded linear operator on $\mathbb{H}$ and
$T=U|T|$ be the Polar Decomposition.
There is $\psi(z)\in\mathcal{L}^{\infty}(\sigma(|T+a|),\mu_{|T+a|})$ such that
$$F_{xx^*}(M_{z\psi(z)}-a)$$
is the unitary equivalent representation of $T$ on $\mathcal{L}^{2}(\sigma(|T+a|),\mu_{|T+a|})$.
\end{corollary}

\begin{proof}
For $a\in\rho(T)$, $S=T+a$ is an invertible bounded linear operator on $\mathbb{H}$,
where $\rho(T)=\mathbb{C}\setminus\sigma(T)$.
With Theorem$~\ref{TjueduizhiniyuTnijueduizhideguanxi15}$,
$F_{xx^{*}}^{\mathbb{H}}$ is an unitary operator and we get

$$
SS^{*}=F_{xx^{*}}^{\mathbb{H}}|S|^{2}(F_{xx^{*}}^{\mathbb{H}})^{*}.
$$

By the proof of Theorem $\ref{Tjifenjiebiaoshi17}$ we get that $F_{xx^*}M_{z\psi(z)}$ is the unitary equivalent representation of $T+a$ on $\mathcal{L}^{2}(\sigma(|T+a|),\mu_{|T+a|})$.
For any $a\in\mathbb{C}$ we get
$$F_{xx^{*}}^{\mathbb{H}}(a)=a.$$

Therefore,
$F_{xx^*}(M_{z\psi(z)}-a)$ is the unitary equivalent representation of $T$ on $\mathcal{L}^{2}(\sigma(|T+a|),\mu_{|T+a|})$
\end{proof}

So if some properties of operators on $\mathbb{H}$ only depending the norm which is compatible with the inner product, then this properties only depending the corresponding properties of elements in
$$
\{M_{\psi_n(z)}:\psi_n(z)\in\mathcal{L}^{\infty}(\sigma(|T^n+a_n|),\mu_{|T^n+a_n|}),a_n\in\rho(T^n),n\in\mathbb{N}\},
$$
just keeping the unitary equivalent representation in mind.
Therefor the unitary equivalent representation is a powerful facility to deal with this problems.
Then we give some properties of normal operator through the unitary equivalent representation.

\begin{corollary}\label{Tzhengguisuanzijifenjiebiaoshi19}
Let $T$ be a bounded linear operator on $\mathbb{H}$ and
$TT^{*}=T^{*}T$.
There is $\psi(z)\in\mathcal{L}^{\infty}(\sigma(|T+a|),\mu_{|T+a|})$ such that $M_{\psi(z)}$ is the unitary equivalent representation of $T$ on $\mathcal{L}^{2}(\sigma(|T+a|),\mu_{|T+a|})$.
That is $T\in\mathcal{A}^{'}(|T|)$,
where $a\in\rho(T)$ and
$$\mathcal{A}^{'}(|T|)=\{A\in\mathcal{B}(\mathbb{H}): AB=BA \text{ for every } B\in\mathcal{A}(|T|)\}.$$
\end{corollary}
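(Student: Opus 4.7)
The plan is to specialize the general representation result of Corollary \ref{yibanTjifenjiebiaoshi18} to the normal case, where the correction factor $\sqrt{|\eta|}$ coming from the measure-theoretic mismatch between $\mu_{|T|}$ and $\mu_{|T^{*}|}$ should collapse. First I would write down the polar decomposition $T=U|T|$ and use the hypothesis $TT^{*}=T^{*}T$ in the form
\[
U|T|^{2}U^{*}=TT^{*}=T^{*}T=|T|^{2}.
\]
This says that $U$ commutes with $|T|^{2}$, and by the continuous functional calculus (or Lemma \ref{weierstrasspingfangkefenyinli2}) $U$ then commutes with $|T|$, hence with every element of $\mathcal{A}(|T|)$.

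Next I would pass to the spectral representation. By Theorem \ref{hanshuyansuan5} the Hilbert space $\mathbb{H}$ is unitarily equivalent to $\mathcal{L}^{2}(\sigma(|T|),\mu_{|T|})$ in such a way that $|T|$ is intertwined with the multiplication operator $M_{z}$. Under this unitary equivalence, $U$ becomes an operator on $\mathcal{L}^{2}(\sigma(|T|),\mu_{|T|})$ that commutes with $M_{z}$, and therefore with every $M_{g}$ for $g\in\mathcal{C}(\sigma(|T|))$, hence (by density, as in the proof of Theorem \ref{hanshuyansuan5}) with the whole maximal abelian von~Neumann algebra $\{M_{g}:g\in\mathcal{L}^{\infty}(\sigma(|T|),\mu_{|T|})\}$. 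The maximality of this algebra in $\mathcal{B}(\mathcal{L}^{2}(\sigma(|T|),\mu_{|T|}))$ forces the image of $U$ to be itself a multiplication operator $M_{\phi}$ with $\phi\in\mathcal{L}^{\infty}(\sigma(|T|),\mu_{|T|})$. Since $U$ is a partial isometry, $|\phi|$ takes only the values $0$ and $1$ almost everywhere.

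The key step that concerns the factor $\sqrt{|\eta|}$ from Corollary \ref{yibanTjifenjiebiaoshi18} will be to observe that this factor is exactly the square root of the Radon--Nikodym derivative $d\mu_{|T^{*}|}/d\mu_{|T|}$ produced in Corollary \ref{TjueduizhiyuTxingjueduizhideguanxi16} from the unitary equivalence $|T|\sim|T^{*}|$. When $T$ is normal we have $|T^{*}|^{2}=TT^{*}=T^{*}T=|T|^{2}$, so $|T|$ and $|T^{*}|$ are literally equal, the GNS measures $\mu_{|T|}$ and $\mu_{|T^{*}|}$ coincide, and the auxiliary unitary $F_{xx^{*}}^{\mathbb{H}}$ is induced by the constant function $1$; hence $|\eta|\equiv 1$ a.e. and the representation of $T=U|T|$ simplifies to $M_{\phi}\cdot M_{z}=M_{z\phi}$, which is the first assertion.

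Finally, the commutation conclusion $T\in\mathcal{A}'(|T|)$ is then a direct consequence: in the representation $|T|$ corresponds to $M_{z}$, $T$ corresponds to $M_{z\phi}$, and any two multiplications on $\mathcal{L}^{2}(\sigma(|T|),\mu_{|T|})$ commute; transporting this back by the unitary of Theorem \ref{hanshuyansuan5} gives $TB=BT$ for every $B\in\mathcal{A}(|T|)$. The only delicate point I anticipate is justifying rigorously that the representation produced by Corollary \ref{yibanTjifenjiebiaoshi18} in the normal case can be chosen so that $\sqrt{|\eta|}\equiv 1$; this requires being careful that the \emph{same} cyclic vector (and hence the \emph{same} measure on $\sigma(|T|)$) is used when representing both $|T|$ and $U$, so that the representation of $U=T|T|^{-1}$ inherits automatically the multiplication form from the maximal abelian argument, rather than bringing in a second measure.
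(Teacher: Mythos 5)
The paper states this corollary without any proof, so there is no official argument to compare against; your proposal is a correct and essentially self-contained way to fill that gap, and it follows the route the paper clearly intends (reduce everything to the fact that $\{M_{g}:g\in\mathcal{L}^{\infty}(\sigma(|T|),\mu_{|T|})\}$ is maximal abelian). In fact your opening step is cleaner than detouring through Corollary \ref{yibanTjifenjiebiaoshi18}: from $U|T|^{2}U^{*}=TT^{*}=T^{*}T=|T|^{2}$ one multiplies on the right by $U$ and uses that $U^{*}U$ is the projection onto $(\ker|T|)^{\perp}$, so $|T|^{2}U^{*}U=|T|^{2}$ and hence $U|T|^{2}=|T|^{2}U$; the functional calculus then gives $U|T|=|T|U$, so the factor $\sqrt{|\eta|}$ never appears and there is no need to argue that it collapses to $1$ (though your observation that $|T^{*}|=|T|$ forces $\mu_{|T^{*}|}=\mu_{|T|}$ is a valid way to see this). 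The passage from ``$U$ commutes with $M_{z}$'' to ``$U$ is an $M_{\phi}$'' is also sound: the von Neumann algebra generated by $M_{z}$ on $\mathcal{L}^{2}(\sigma(|T|),\mu_{|T|})$ is the full multiplication algebra, which is its own commutant. The one caveat — inherited from the paper, not introduced by you — is that the identification $\mathbb{H}\cong\mathcal{L}^{2}(\sigma(|T|),\mu_{|T|})$ in Theorem \ref{hanshuyansuan5} presupposes a single $\mathcal{A}(|T|)$-cyclic vector; for a general normal $T$ one must run your argument on each cyclic summand of the decomposition in Lemma \ref{gnsfenjiedingliyingyongyinli3}, and the statement as literally phrased (one measure, one $\phi$) only holds in the multiplicity-one case.
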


\begin{corollary}\label{Tzhengguisuanzijifenjiebiaoshi20}
Let $T$ be a bounded linear operator on $\mathbb{H}$,
such that $T$ is normal if and only if $T$ is unitary equivalent to $M_{\psi(z)}$ on $\mathcal{L}^{2}(\sigma(|T+a|),\mu_{|T+a|})$, and if and only if $T\in\mathcal{A}^{'}(|T|)$,
where $a\in\rho(|T|)$,$\psi(z)\in\mathcal{L}^{\infty}(\sigma(|T+a|),\mu_{|T+a|})$.
\end{corollary}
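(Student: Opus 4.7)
Corollary \ref{Tzhengguisuanzijifenjiebiaoshi19} has already delivered the two forward implications, so writing $(1)$ for normality $TT^{*}=T^{*}T$, $(2)$ for unitary equivalence of $T$ with some $M_{z\phi}$, and $(3)$ for $T\in\mathcal{A}'(|T|)$, the plan is only to close the loop by establishing $(2)\Rightarrow(1)$ and $(3)\Rightarrow(1)$.

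The implication $(2)\Rightarrow(1)$ is essentially formal. Every scalar multiplication operator $M_{z\phi}$ on an $\mathcal{L}^{2}$-space has adjoint $M_{\overline{z\phi}}$ and commutes with it, since pointwise multiplication of complex-valued functions is commutative. Hence $M_{z\phi}$ is normal, and normality is preserved under unitary equivalence, so $T$ is normal.

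For $(3)\Rightarrow(1)$ I would transport the hypothesis into the model furnished by Corollary \ref{yibanTjifenjiebiaoshi18}: there is a unitary $U\colon\mathbb{H}\to\mathcal{L}^{2}(\sigma(|T|),\mu_{|T|})$ sending $|T|$ to $M_{z}$ and $T$ to $\sqrt{|\eta|}M_{z\psi}$, so the hypothesis $T\in\mathcal{A}'(|T|)$ becomes the statement that $UTU^{*}$ commutes with $M_{z}$. By Theorem \ref{hanshuyansuan5} the cyclic vector $\xi$ for $\mathcal{A}(|T|)$ corresponds under $U$ to the constant function $1$, which is cyclic for $M_{z}$. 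A cyclic normal operator generates a maximal abelian von Neumann algebra that coincides with its own commutant, namely
\[
\{M_{\phi}:\phi\in\mathcal{L}^{\infty}(\sigma(|T|),\mu_{|T|})\}.
\]
Consequently $UTU^{*}=M_{\phi}$ for some $\phi\in\mathcal{L}^{\infty}$, which is normal; so $T$ is normal, and simultaneously $(2)$ holds after rewriting $M_{\phi}$ as $M_{z\cdot(\phi/z)}$.

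The main obstacle I expect is the combined step ``commutes with $M_{z}$ $\Rightarrow$ lies in $\{M_{\phi}\}$''. It rests on $M_{z}$ being cyclic in $\mathcal{L}^{2}(\sigma(|T|),\mu_{|T|})$, which the corollary does not hypothesise explicitly but which is guaranteed by the measure-theoretic construction underlying Corollary \ref{yibanTjifenjiebiaoshi18}. A secondary subtlety is the rewriting $M_{\phi}=M_{z\cdot(\phi/z)}$ when $0\in\sigma(|T|)$: essential boundedness of $\phi/z$ is automatic off $\{0\}$ and is forced at $\{0\}$ by $\phi(0)=0$ $\mu_{|T|}$-a.e., itself a direct consequence of normality.
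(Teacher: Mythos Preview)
The paper itself offers no proof of this corollary; it is stated immediately after Corollary~\ref{Tzhengguisuanzijifenjiebiaoshi19} as a self-evident sharpening. Your proposal is therefore not competing with an argument in the paper but supplying one, and the logic you outline---$(2)\Rightarrow(1)$ by normality of multiplication operators, $(3)\Rightarrow(1)$ by transporting to the model and identifying the commutant of the cyclic $M_{z}$ with $\{M_{\phi}:\phi\in\mathcal{L}^{\infty}\}$---is exactly how one would be expected to close the equivalences from the forward directions already recorded in Corollary~\ref{Tzhengguisuanzijifenjiebiaoshi19}. Your observation that the cyclicity of $M_{z}$ is not hypothesised but is built into the construction behind Corollary~\ref{yibanTjifenjiebiaoshi18} and Theorem~\ref{hanshuyansuan5} is also correct and worth making explicit.

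One small correction: your handling of the ``secondary subtlety'' is not quite right. Knowing $\phi(0)=0$ $\mu_{|T|}$-a.e.\ only says $\phi$ vanishes on the single point $\{0\}$; it does not by itself force $\phi/z$ to be essentially bounded as $z\to 0$. This does not damage the proof, however, because you have already obtained $(3)\Rightarrow(1)$ cleanly, and $(1)\Rightarrow(2)$ then follows from Corollary~\ref{Tzhengguisuanzijifenjiebiaoshi19}, which produces the representation $M_{z\psi}$ with $\psi\in\mathcal{L}^{\infty}$ directly (the polar factor $U$ lands in the commutant and becomes $M_{\psi}$). The rewriting step $M_{\phi}=M_{z\cdot(\phi/z)}$ is therefore unnecessary and can simply be dropped.
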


\section{Invariant subspace problem}

In this part, we study the invariant subspace problem on Hilbert spaces.
With the fact that the exist of nontrivial invariant subspace is unchanged by similarly between bounded linear operators on Banach spaces \cite{JohannesKaiser2016}.

That is, for $R,S,T\in\mathcal{B}(\mathbb{H})$ and $T$ is invertible,
if $R=TST^{-1}$,
then $R$ has nontrivial invariant subspace if and only if $S$ has.

Therefore for any given $T\in\mathcal{B}(\mathbb{H})$,
we can using the unitary equivalent representation of $T$ to solve the invariant subspace problem of $T$.
Using the construction of $F_{xx^*}$,
we give a sufficient condition to estimate this problem.

For convenience define $Fix(F_{xx^*}^{\mathbb{H}})=\{F_{xx^*}^{\mathbb{H}}(f)=f;f\in\mathbb{H}\}$,
similarly we can define $Fix(F_{xx^*})$.
Obviously, $Fix(F_{xx^*}^{\mathbb{H}})$ is a subspace of $\mathbb{H}$.
\begin{theorem}\label{TyujueduizhiTdebubianzikongjian21}
Let $\dim{\mathbb{H}}>1$,
$T\in\mathcal{B}(\mathbb{H})$ and
$F_{xx^*}(M_{z\psi(z)}-a)$ is the unitary equivalent representation of $T$ on $\mathcal{L}^{2}(\sigma(|T+a|),\mu_{|T+a|})$.
If ${z\psi(z)}\in Fix(F_{xx^*})$,
then $T$ has a nontrivial invariant subspace space .
\end{theorem}

\begin{proof}
Only need to prove this for infinite dimension separable complex Hilbert space $\mathbb{H}$.
Obviously, if $A\subset\mathbb{H}$ is a nontrivial invariant subspace of $T$
if and only if $A$ is a nontrivial invariant subspace of $T+a$, where $a\in\mathbb{C}$.

Let $a\in\rho(T)$,
then with Corollary $\ref{yibanTjifenjiebiaoshi18}$
there is $\psi(z)\in\mathcal{L}^{\infty}(\sigma(|T+a|),\mu_{|T+a|})$ such that $F_{xx^*}(M_{z\psi(z)}-a)$ is the unitary equivalent representation of $T$ on $\mathcal{L}^{2}(\sigma(|T+a|),\mu_{|T+a|})$.

With the construction $F_{xx^{*}}$ of Theorem $\ref{TjueduizhiniyuTnijueduizhideguanxi15}$,
we get $(F_{xx^{*}})^2=identity$ and $Fix(F_{xx^*})\neq\emptyset$.

(1) If $Fix(F_{xx^*}^{\mathbb{H}})=\mathbb{H}$,
that is $F_{xx^{*}}=identity$,
then $M_{z\psi(z)}-a$ is a normal operator.
By\cite{JohnBConway1990},
The invariant subspace problem is trivial for normal operator.

(2) If $Fix(F_{xx^*}^{\mathbb{H}})\neq\mathbb{H}$,
then $Fix(F_{xx^*})$ is a nontrivial invariant subspace of $F_{xx^*}$,
for ${z\psi(z)}\in Fix(F_{xx^*})$,
we get
$$
F_{xx^*}(M_{z\psi(z)}-a)Fix(F_{xx^*})\subseteq Fix(F_{xx^*}),
$$

that is, $Fix(F_{xx^*})$ is a nontrivial invariant subspace of $F_{xx^*}(M_{z\psi(z)}-a)$.
\end{proof}

\section{Lebesgue operator}

In this part, using our construction $F_{xx*}(M_{z\psi(z)}-a)$ of the unitary equivalent representations of $T$ on $\mathcal{L}^{2}(\sigma(|T+a|),\mu_{|T+a|})$,
we study chaos of invertible bounded linear operator on Hilbert spaces.

For the example of singular integral in mathematical analysis,
we know that is independent the convergence or the divergence of the weighted integral between  $x$ and $x^{-1}$,
however some times that indeed dependent for a special weighted function.

In the view of the singular integral and by our construction of $F_{xx*}(M_{z\psi(z)}-a)$,
we define Lebesgue class
and prove that $T$ and $T^{*-1}$ are Li-Yorke chaotic at the same time when $T$ is a Lebesgue operator.
Also, we give an example such that $T$ is a Lebesgue operator, but not is a normal operator.

Let $dx$ be the Lebesgue measure on $\mathcal{L}^{2}(\mathbb{R}_{+})$.
With Theorem~\ref{hanshuyansuan5} we get $d\mu_{|T^n|}$ is the complete Borel measure
and $\mathcal{L}^2(\sigma(|T^n|),d\mu_{|T^n|})$ is a Hilbert space.

If exists $N>0$,
for any $n\geq N$, $d\mu_{|T^n|}$ is absolutely continuity with respect to $dx$,
where $n\in\mathbb{N}$.
By the Radon-Nikodym Theorem \cite{JohnBConway1990}P380 there is $f_n\in\mathcal{L}^{1}(\mathbb{R}_{+})$ such that $d\mu_{|T^n|}=f_n(x)\,dx$.

\begin{definition}\label{lebesguesuanzileidingyi6}
Let $T$ be an invertible bounded linear operator on the separable Hilbert space $\mathbb{H}$ over $\mathbb{C}$,
if $T$ satisfies the following assertions:

$(1)$ If $\exists N>0$, for $\forall n\geq N$, $n\in\mathbb{N}$
\begin{eqnarray*}
\left\{\begin{array}{lr}
d\mu_{|T^n|}=f_n(x)\,dx,& f_n\in\mathcal{L}^{1}(\mathbb{R}_{+}).\\
\\
x^{2}f_n(x)=f_{n}(x^{-1}),& 0<x\leq1.
\end{array}\right.
\end{eqnarray*}

$(2)$ If $\exists N>0$,
for $\forall n\geq N,n\in\mathbb{N}$, there is a $\mathcal{A}(|T^n|)$-cyclic vector $\xi_n$.
And for any given $0\neq x\in\mathbb{H}$ and for any given $0\neq g_n(t)\in\mathcal{L}^2(\sigma(|T^n|),d\mu_{|T^n|})$,
there is an unique $0\neq y\in\mathbb{H}$ such that $y=g_n(|T^n|^{-1})\xi_n$ whenever $x=g_n(|T^n|)\xi_n$.

Then we say that $T$ is a Lebesgue operator,
let $\mathcal{B}_{Leb}(\mathbb{H})$ denote the set of all Lebesgue operators.
\end{definition}

\begin{theorem}\label{lebesguesuanzihundundeduichengxing7}
Let $T$ be a Lebesgue operator on the separable Hilbert space $\mathbb{H}$ over $\mathbb{C}$,
then $T$ is Li-Yorke chaotic if and only if $T^{*-1}$ is.
\end{theorem}
\begin{proof}
Let $\mathbb{H}$ be $\mathcal{A}(|T^n|)$-cyclic,
that is,
there is a vector $\xi_n$ such that $\overline{\mathcal{A}(|T^n|)\xi_n}=\mathbb{H}$.
If $x_0$ be a Li-Yorke chaotic point of $T$,
by the define of Lebesgue operator,
we get that for enough large $n\in\mathbb{N}$,
there are $g_n(x)\in\mathcal{L}^2(\sigma(|T^n|),d\mu_{|T^n|})$,
$f_n(x)\in\mathcal{L}^{2}(\mathbb{R}_{+})$ and $y_0\in\mathbb{H}$
such that $x_0=g_n(|T^n|)\xi_n$,
$y_0=g_n(|T^n|^{-1})\xi_n$ and $d\mu_{|T^n|}=f_n(x)\,dx$.
Therefore

\begin{eqnarray*}
&&\|T^nx_0\|\\
&&=<T^{n*}T^nx_0,x_0>\\
&&=<|T^n|^2g_n(|T^n|)\xi_n,g_n(|T^n|)\xi_n>\\
&&=<g_n(|T^n|)^{*}|T^n|^2g_n(|T^n|)\xi_n,\xi_n>\\
&&=\displaystyle\int\limits_{\sigma(|T^n|)}{x^2g_n(x)\bar{g}(x)\,d\mu_{|T^n|}(x)}\\
&&=\displaystyle\int_{0}^{+\infty}{x^2|g_n(x)|^2f_n(x)\,dx}\\
&&=\displaystyle\int_{0}^{1}{x^2|g_n(x)|^2f_n(x)\,dx}+\displaystyle\int_{1}^{+\infty}{x^2|g_n(x)|^2f_n(x)\,dx}\\
&&=\displaystyle\int_{0}^{1}{x^2|g_n(x)|^2f_n(x)\,dx}+\displaystyle\int_{0}^{1}{x^{-4}|g_n(x^{-1})|^2f_n(x^{-1})\,dx}\\
&&\triangleq
\displaystyle\int_{0}^{1}{|g_n(x)|^2f_n(x^{-1})\,dx}+\displaystyle\int_{0}^{1}{x^{-2}|g_n(x^{-1})|^2f_n(x)\,dx}\\
&&=\displaystyle\int_{1}^{+\infty}{x^{-2}|g_n(x^{-1})|^2f_n(x)\,dx}+\displaystyle\int_{0}^{1}{x^{-2}|g_n(x^{-1})|^2f_n(x)\,dx}\\
&&=\displaystyle\int_{0}^{+\infty}{x^{-2}|g_n(x^{-1})|^2f_n(x)\,dx}\\
&&=\displaystyle\int\limits_{\sigma(|T^n|)}{x^{-2}g_n(x^{-1})\bar{g}_n(x^{-1})\,d\mu_{|T^n|}(x)}\\
&&=<g_n(|T^n|)^{*}|T^n|^{-2}g_n(|T^n|^{-1})\xi_n,\xi_n>\\
&&=<|T^n|^{-2}g_n(|T^n|^{-1})\xi_n,g_n(|T^n|^{-1})\xi_n>\\
&&=<|T^n|^{-2}y_0,y_0>\\
&&=<T^{-n}T^{-n*}y_0,y_0>\\
&&=\|T^{*-n}y_0\|.
\end{eqnarray*}

Where $\triangleq$ following the define of $f_n(x)$.
By the Li-Yorke chaos definition, we get that $T$ is Li-Yorke chaotic if and only if $T^{*-1}$ is Li-Yorke chaotic.
\end{proof}
Define a distributional function $F_{x}^{n}(\tau)=\frac{1}{n}\sharp\{0\leq i\leq n:\|T^n(x)\|<\tau\}$,
where $T\in\mathcal{B}(\mathbb{H}),x\in\mathbb{H},n\in\mathbb{N}$.
And define

$$
F_{x}(\tau)=\liminf\limits_{n\to\infty} F_{x}^{n}(\tau),\qquad
F_{x}^{*}(\tau)=\limsup\limits_{n\to\infty} F_{x}^{n}(\tau).
$$

\begin{definition}\label{fenbuhundundedingyi2}
Let $T\in\mathcal{B}(\mathbb{H})$, if there exists $x\in\mathbb{H}$ and

(1) If $F_{x}(\tau)=0,\exists\tau>0$,
and $F_{x}^{*}(\epsilon)=1,\forall\epsilon>0$,
then we say that $T$ is distributional chaotic or $I$-distributionally chaotic.

(2) If $F_{x}^{*}(\epsilon)>F_{x}(\tau),\forall\tau>0$,
and $F_{x}^{*}(\epsilon)=1,\forall\epsilon>0$,
then we say that $T$ is $II$-distributionally chaotic.

(3) If $F_{x}^{*}(\epsilon)>F_{x}(\tau),\forall\tau>0$, then we say that $T$ is $III$-distributionally chaotic.
\end{definition}

\begin{corollary}\label{lebesguesuanzifenbuhundundengjia8}
Let $T$ be a Lebesgue operator on the separable Hilbert space $\mathbb{H}$ over $\mathbb{C}$,
then $T$ is $I$-distributionally chaotic (or $II$-distributionally chaotic or $III$-distributionally chaotic) if and only if $T^{*-1}$ is $I$-distributionally chaotic (or $II$-distributionally chaotic or $III$-distributionally chaotic).
\end{corollary}

\begin{theorem}\label{lebesguesuanzicunzaixing9}
There is an invertible bounded linear operator $T$ on the separable Hilbert space $\mathbb{H}$ over $\mathbb{C}$,
$T$ is Lebesgue operator but not is a normal operator.
\end{theorem}

\begin{proof}
Let $0<a<b<+\infty$,
then $\mathcal{L}^2([a,b])$ is a separable Hilbert space over $\mathbb{R}$,
because any separable Hilbert space over $\mathbb{R}$ can be expanded to a separable Hilbert space over $\mathbb{C}$,
it is enough to prove the conclusion on $\mathcal{L}^2([a,b])$.
We prove the conclusion by six parts:

(1) Let $0<a<1<b=\frac{1}{a}<+\infty$,
construct measure preserving transformation on $[a,b]$.

Let $M=\{[a,\frac{b-a}{2}],[\frac{b-a}{2},b]\}$.
There is a Borel algebra $\xi(M)$ generated by $M$,
define $\Phi:[a,b]\to[a,b]$,

$$
\Phi([a,\frac{b-a}{2}])=[\frac{b-a}{2},b],\qquad
\Phi([\frac{b-a}{2},b])=[a,\frac{b-a}{2}].
$$

Then $\Phi$ is an invertible measure preserving transformation on the Borel algebra $\xi(M)$.
By \cite{PeterWalters1982}P63,
$U_{\Phi}\neq1$ and $U_{\Phi}$ is a unitary operator induced by $\Phi$,
where $U_{\Phi}$ is the composition $U_{\Phi}h=h\circ\Phi$,
for any $h\in\mathcal{L}^2([a,b])$.

(2) Define $M_xh=xh$ on $\mathcal{L}^2([a,b])$,
then $M_x$ is an invertible positive operator.

(3) For $f(x)=\cfrac{|\ln\,x|}{x}$, $x>0$,
define $d\mu=f(x)d\,x$.
then $f(x)$ is continuous and $f(x)>0$, a.e. $x\in[a,b]$,
hence $d\,\mu$ that is absolutely continuous with respect to $d\,x$ is finite positive complete Borel measure
and $\mathcal{L}^2([a,b],d\,\mu)$ is a separable Hilbert space over $\mathbb{R}$.
Moreover $\mathcal{L}^2([a,b])$ and $\mathcal{L}^2([a,b],d\,\mu)$ are unitary equivalent.

(4) Let $T=U_{\Phi}M_x$,
we get

$$
T^{*}T=U_{\Phi}TT^{*}U_{\Phi}^{*},\qquad U_{\Phi}\neq1.
$$

Because of
$$
U_{\Phi}M_{x}\neq M_{x}U_{\Phi},\qquad U_{\Phi}M_{x^2}\neq U_{\Phi}M_{x^2},
$$
we get that $T$ is not a normal operator and $\sigma(|T|)=[a,b]$.

(5) The operator $T=U_{\Phi}M_x$ on $\mathcal{L}^2([a,b])$ is corresponding to the operator $T^{\prime}$ on $\mathcal{L}^2([a,b],d\,\mu)$, $T^{\prime}$ is invertible bounded linear operator and is not a normal operator,
$\sigma(|T^{\prime}|)=[a,b]$.

(6) From

$$
\displaystyle\int_{a}^{b}x^{n}f(x)d\,x=\int_{a^n}^{b^n}tf(t^{\tfrac{1}{n}})\frac{1}{nt^{\tfrac{n-1}{n}}}d\,t,
$$

let

$$
f_n(t)=\frac{1}{n}I_{[a^n,b^n]}f(t^{\tfrac{1}{n}})\frac{1}{t^{\tfrac{n-1}{n}}},
$$

we get that $f_n(t)$ is continuous and almost everywhere positive,
hence $f_n(t)d\,t$ is a finite positive complete Borel measure.

For any $E\subseteq\mathbb{R}_{+}$ define $I_{E}=1$ when $x\in E$
else $I_{E}=0$,
so $I_{E}$ is the identity function on $E$.

\scalebox{1.2}[1.2]
{$
\left.\begin{array}{l}
(i) f_n(t^{-1})=\frac{1}{n}I_{[a^n,b^n]}f(t^{-\tfrac{1}{n}})\frac{1}{t^{-\tfrac{n-1}{n}}}
=\frac{1}{n}I_{[a^n,b^n]}\frac{|\ln\,t^{-\tfrac{1}{n}}|}{t^{-\tfrac{1}{n}}}\frac{1}{t^{-\tfrac{n-1}{n}}}\\
=\frac{1}{n}I_{[a^n,b^n]}t|\ln\,t^{\tfrac{1}{n}}|.
\end{array}\right.
$}

\scalebox{1.2}[1.2]
{$
\left.\begin{array}{l}
(ii) t^2f_n(t)=\frac{1}{n}I_{[a^n,b^n]}f(t^{\tfrac{1}{n}})\frac{t^2}{t^{\tfrac{n-1}{n}}}
=\frac{1}{n}I_{[a^n,b^n]}\frac{|\ln\,t^{\tfrac{1}{n}}|}{t^{\tfrac{1}{n}}}\frac{t^2}{t^{\tfrac{n-1}{n}}}\\
=\frac{1}{n}I_{[a^n,b^n]}t{|\ln\,t^{\tfrac{1}{n}}|}.
\end{array}\right.
$}

By $(i)(ii)$ we get $x^2f_{n}(x)=f_{n}(x^{-1})$.
From $\sigma(|T^{\prime n}|)=[a^n,b^n]$ and
$$
\int_{a^n}^{b^n}t^2f(t^{\tfrac{1}{n}})\frac{1}{nt^{\tfrac{n-1}{n}}}d\,t=\int_{0}^{+\infty}t^2f_{n}(t)dt,
$$
let $d\,\mu_{|T^{\prime n}|}=f_n(t)d\,t$,
then $d\,\mu_{|T^{\prime n}|}$ is the finite positive complete Borel measure.

For any given $0\neq h(x)\in\mathcal{L}^2([a,b])$ we get $0\neq h(x^{-1})\in\mathcal{L}^2([a,b])$.
$I_{[a,b]}$ is a $\mathcal{A}(|M_{x}^{n}|)$-cyclic vector of the multiplication $M_{x}^{n}=M_{x^n}$,
and $I_{[a^n,b^n]}$ is a $\mathcal{A}(|T^{\prime n}|)$-cyclic vector of $|T^{\prime n}|$,
By Definition $\ref{lebesguesuanzileidingyi6}$ we get that $T^{\prime}$ is Lebesgus operator but not is a normal operator.
\end{proof}

\begin{corollary}\label{lebesguesuanzicunzaizhengguisuanzi10}
There is an invertible bounded linear operator $T$ on the separable Hilbert space $\mathbb{H}$ over $\mathbb{C}$,
$T$ is a Lebesgue operator and also is a positive operator.
\end{corollary}

\begin{corollary}\label{lebesguesuanziyuzhengguisuanzidejiaofeikong11}
There is
$$\mathcal{B}_{Leb}(\mathbb{H})\cap\mathcal{B}_{Nor}(\mathbb{H})\neq\emptyset,$$
and
$$\mathcal{B}_{Leb}(\mathbb{H})\cap(\mathcal{B}(\mathbb{H})\setminus\mathcal{B}_{Nor}(\mathbb{H}))\neq\emptyset,$$

where $\mathcal{B}_{Nor}(\mathbb{H})$ is the set of all normal operators on $\mathbb{H}$.
\end{corollary}

By the construction of the unitary operator $F_{xx*}$ in this paper,
we could study the operator by the integral on $\mathbb{R}$.
This way maybe neither change Li-Yorke chaotic nor the computing,
but by the integral in mathematical analysis on the theoretical level we should find
some operator class and study its properties, as we introduce the Lebesgue operator in this part.

\bibliographystyle{amsplain}

\end{document}